\documentclass[11pt,epsf]{article}
\usepackage{amsfonts}
\usepackage{amsmath,amscd}
\usepackage{amssymb}
\usepackage{amsthm}
\usepackage{newlfont}
\newcommand{\f}{\frac}

 \newtheorem{thm}{Theorem}[section]
 \newtheorem{cor}[thm]{Corollary}
 \newtheorem{lem}[thm]{Lemma}
 \newtheorem{prop}[thm]{Proposition}
 \theoremstyle{definition}
 
 \theoremstyle{remark}

 \numberwithin{equation}{section}
 %\numberwithin{example}{section}

\begin{document}
\title{A note on the Schur multiplier of a nilpotent Lie algebra}
\author{Peyman Niroomand$^{a}$, Francesco Russo$^{b}$ \\
{\small $^{a}$ School of Mathematics and Computer Science
Damghan University of Basic Sciences,}\\{\small Damghan, Iran.}\\{\small $^b$ University of Naples Federico II
via Cinthia, 80126,  Naples, Italy.}}

\date{ } \maketitle

\begin{abstract}
For a nilpotent Lie algebra $L$ of dimension $n$ and dim$(L^2)=m~(m\geq 1)$,
we find the upper bound dim$(M(L))\leq \frac
{1}{2}(n+m-2)(n-m-1)+1$, where $M(L)$ denotes the Schur multiplier
of $L$. In case $m=1$ the equality holds if and only if $L\cong
H(1)\oplus A$, where $A$ is an abelian Lie algebra of dimension
$n-3$ and $H(1)$ is the Heisenberg algebra of dimension 3. \\
\textit{Key Words}: Schur multiplier, nilpotent Lie algebras.\\
\textit{2000 Mathematics Subject Classification}: Primary 17B30; Secondary 17B60, 17B99.\\

\end{abstract}

\section{Introduction}
It is well known that restrictions on the Schur multiplier of finite
$p$--groups ($p$ a prime) are related to significant information on
the Schur multiplier $M(L)$ of a nilpotent Lie algebra $L$ of
dimension $n$. Many times it is possible to get structural results
on $L$ only looking at the size of $M(L)$. This fact was already
noted by Batten and others in \cite[Theorem 5]{es}. Given $t(L) =
\frac{1}{2}n(n- 1) - \mathrm{dim} (M(L))$, they prove  in
\cite[Theorem 3]{es} that $t(L) = 1$ if and only if $L\cong H(1)$,
where $H(1)$ is the Heisenberg algebra of dimension 3. Moreover
\cite[Theorem 5]{es} shows that $t(L) = 2$ if and only if $L \cong
H(1)\oplus A $, where $A$ is an abelian algebra of $\mathrm{dim}(A)
= 1$.

To convenience of the reader, we recall that a finite dimensional
Lie algebra $L$ is called $Heisenberg$ provided that $L^2 = Z(L)$
and $\mathrm{dim}(L^2) = 1$. Such algebras are odd dimensional with
basis $v_1, \ldots , v_{2m}, v$ and the only non-zero multiplication
between basis elements is $[v_{2i-1}, v_{2i}] = - [v_{2i},
v_{2i-1}]= v$ for $i = 1, \ldots ,m$. The symbol $H(m)$ denotes the
Heisenberg algebra of dimension $2m + 1$.

There are successive contributions on the same line of
investigation, once we prescribe a value for $t(L)$. In \cite{es3}
the cases $t(L)=3,4,5,6$ are studied and weaker characterizations
are obtained (see \cite[Theorems 2, 3, 4]{es3}). Under the same
prospective we should read \cite{es2,mo}, where homological
machineries are involved.

It is instructive to note that in \cite[Section 1]{es} Batten and
others declare explicitly that their contributions originated from
the classification of Zhou in \cite{zhou}, where a corresponding
situation for $p$--groups was analyzed. In a certain sense the same
motivation allows us to write the present paper.

A classic restriction of Jones \cite[Theorem 3.1.4]{ka} on the Schur
multiplier of a non-abelian $p$--group has been recently improved by
the first author. More precisely it is proved in \cite{ni} that a
non-abelian $p$--group $G$ of order $p^n$ with derived subgroup of
order $p^k$ has \[|M(G)| \leq p^{\frac{1}{2} (n+k-2)(n-k-1)+1}.\] In
particular,
\[|M(G)| \leq p^{\frac{1}{2} (n-1)(n-2)+1}\] and the equality holds
in this last bound if and only if $G = H \times Z$, where $H$ is
extra special of order $p^3$ and exponent $p$, and $Z$ is an
elementary abelian $p$--group.

The present paper is devoted to obtain similar results for Lie
algebras.

\section{Preliminaries}
The present section illustrates how the ideas of Zhou \cite{zhou}
have been adapted in \cite{es3} to the context of Lie algebras. This
is an important feedback for our main theorems. For instance, the
following is analogous to \cite[Theorem 2.5.2]{ka}.
\begin{prop} $\mathrm{(See}$ \cite[Lemma 4]{es}$\mathrm{).}$ \label{p}Let $L$ be a finite dimensional Lie algebra, $K$ an ideal of
$L$ and $H = L/K$. Then there exists a finite dimensional Lie
algebra $J$ and and ideal $M$ of $J$ such that
\begin{itemize} \item[(i)]$L^2\cap K\cong J/M;$
\item[(ii)]$M(L)\cong M;$ \item[(iii)]$M(H)$ is an epimorphic image of $J$.
\end{itemize}
\end{prop}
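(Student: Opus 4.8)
The plan is to imitate, in the category of Lie algebras, the proof of the corresponding statement for groups in \cite[Theorem 2.5.2]{ka}, the translation being carried out through a free presentation. First I would choose a free Lie algebra $F$ together with a surjection $F\to L$ whose kernel is an ideal $R$, so that $L\cong F/R$ and, by the Hopf-type formula for the Schur multiplier, $M(L)\cong (F^2\cap R)/[F,R]$, a description that does not depend on the chosen presentation. Pulling $K$ back along $F\to L$ yields an ideal $S$ of $F$ with $R\subseteq S$ and $S/R\cong K$; then $F/S\cong (F/R)/(S/R)\cong H$, so $F\to H$ is again a free presentation and $M(H)\cong (F^2\cap S)/[F,S]$.

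Next I would set up the small lattice of ideals that makes the argument run. A short Jacobi-identity computation, using that $R$ and $S$ are ideals of $F$, shows that $[F,R]$ and $[F,S]$ are ideals of $F$, and one has the inclusions
$$[F,R]\subseteq [F,S]\subseteq F^2\cap S,\qquad [F,R]\subseteq F^2\cap R\subseteq F^2\cap S,$$
with every term an ideal of $F$, hence of $F^2\cap S$. I would then define
$$J:=\frac{F^2\cap S}{[F,R]},\qquad M:=\frac{F^2\cap R}{[F,R]},$$
so that $M$ is an ideal of $J$. The one design choice here is to divide $F^2\cap S$ by $[F,R]$ rather than by $[F,S]$: this is exactly what allows (ii) and (iii) to hold at the same time.

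Now (ii) is immediate from the choice of presentation, since $M=(F^2\cap R)/[F,R]\cong M(L)$. For (i), because $R\subseteq S$ the modular law gives $(F^2+R)\cap S=(F^2\cap S)+R$, and therefore
$$L^2\cap K=\frac{(F^2+R)\cap S}{R}=\frac{(F^2\cap S)+R}{R}\cong\frac{F^2\cap S}{F^2\cap R}\cong J/M .$$
For (iii), $[F,S]/[F,R]$ is an ideal of $J$ and the third isomorphism theorem gives $J/([F,S]/[F,R])\cong (F^2\cap S)/[F,S]\cong M(H)$, so $M(H)$ is the image of $J$ under this quotient map. Finally, the short exact sequence $0\to M\to J\to L^2\cap K\to 0$ presents $J$ as an extension of $M(L)$ by $L^2\cap K$, both of which are finite dimensional since $L$ is, so $J$ is finite dimensional, as required.

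The proof is essentially a diagram chase; the only points that deserve care are the verification that $[F,R]$ and $[F,S]$ are ideals (so that the subquotients are legitimate Lie algebras), the modular-law identification used in (i), and the standard facts that $(F^2\cap R)/[F,R]$ computes $M(L)$ and is finite dimensional. None of these is a real obstacle, so I do not expect any serious difficulty beyond organising the identifications cleanly; if anything, the subtle step is recognising in advance that the right candidate for $J$ carries $[F,R]$ — not $[F,S]$ — in the denominator.
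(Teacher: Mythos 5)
Your argument is correct: the Hopf-formula identifications $M(L)\cong (F^2\cap R)/[F,R]$ and $M(H)\cong (F^2\cap S)/[F,S]$, together with the choices $J=(F^2\cap S)/[F,R]$ and $M=(F^2\cap R)/[F,R]$ and the modular-law step for (i), give exactly the stated conclusions. The paper itself offers no proof of this proposition, quoting it from \cite[Lemma 4]{es}, and your free-presentation argument is precisely the standard proof given there (the Lie-algebra translation of \cite[Theorem 2.5.2]{ka}), so there is nothing to add.
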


The Schur multiplier of the direct product of two finite groups is
equal to the direct product of the Schur multipliers of the two
factors plus the tensor product of the abelianization of the two
groups (see \cite[Theorem 2.2.10]{ka}). This is a general fact of
homology, which is known as the K\"unneth Formula (see \cite{rot}),
and it is true also for two finite dimensional Lie algebras $H$ and
$K$. The K\"unneth Formula was originally obtained by Schur in 1904
(see \cite{sch}). We recall that the symbol $\otimes$ denotes the
usual tensor product of abelian Lie algebras. Then we have
\[M(H\oplus K)=M(H)\oplus M(K) \oplus (H/H^2\otimes K/K^2).\] At this point we
may give a short proof of \cite[Theorem 1]{es} as follows.

\begin{thm}\label{ds}
Let $A$ and $B$ be finite dimensional Lie algebras. Then
\[\mathrm{dim} (M(A\oplus B)) = \mathrm{dim} (M(A)) + \mathrm{dim} (M(B))
+ \mathrm{dim}(A/A^2\otimes B/B^2).\]
\end{thm}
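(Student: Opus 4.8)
The plan is to read the statement off directly from the K\"unneth Formula recalled just above, which we are taking for granted in the category of finite dimensional Lie algebras. Setting $L = A\oplus B$, that formula exhibits $M(L)$ as a direct sum of three vector spaces, namely $M(A)$, $M(B)$ and the tensor product $A/A^2\otimes B/B^2$. Since the additive structure underlying any Lie algebra is a vector space, and $\dim$ is additive over finite direct sums of vector spaces, taking dimensions on both sides of
\[M(A\oplus B)=M(A)\oplus M(B)\oplus (A/A^2\otimes B/B^2)\]
immediately gives
\[\dim (M(A\oplus B)) = \dim (M(A)) + \dim (M(B)) + \dim(A/A^2\otimes B/B^2).\]
No induction and no case analysis are required; the content of the theorem is entirely contained in the decomposition of $M(A\oplus B)$.

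The only point deserving a word is the last summand. Since $A/A^2$ and $B/B^2$ are abelian Lie algebras, the symbol $\otimes$ is the ordinary tensor product of the underlying vector spaces, so $\dim(A/A^2\otimes B/B^2)=\dim(A/A^2)\cdot\dim(B/B^2)$; one may, however, leave the term in the form $\dim(A/A^2\otimes B/B^2)$, which is the shape in which it is used in the applications and in the bound for $\dim(M(L))$ announced in the abstract. This recovers \cite[Theorem 1]{es} in one line once the K\"unneth Formula is in hand.

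Honestly there is no hard step along this route; the work has been absorbed into the K\"unneth Formula. The only place where an obstacle would appear is if one insisted on a proof independent of that formula: one would start from free presentations $0\to R\to F\to A\to 0$ and $0\to S\to G\to B\to 0$, assemble a free presentation of $A\oplus B$ on the combined generating set, and compute the relevant quotient for the resulting relation ideal, keeping track of the cross terms coming from $[F,G]$. There the genuinely delicate point is identifying the cross-term quotient with $A/A^2\otimes B/B^2$ and verifying that the resulting sum inside $M(A\oplus B)$ is \emph{direct}, i.e.\ that the three pieces meet trivially. Because the K\"unneth Formula packages exactly this bookkeeping, invoking it is the economical choice, and it is the one we follow.
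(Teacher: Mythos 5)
Your proof is correct and matches the paper's: the paper's entire proof is ``Use the K\"unneth Formula above mentioned,'' and you do exactly that, simply taking dimensions in the decomposition $M(A\oplus B)=M(A)\oplus M(B)\oplus (A/A^2\otimes B/B^2)$. The extra remarks on directness and free presentations are fine but not needed beyond what the paper assumes.
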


\begin{proof} Use the K\"unneth
Formula above mentioned.
\end{proof}

The following result is proved in \cite[Theorem 2.5.5 (ii)]{ka} for
groups.
\begin{cor}\label{sr}
Let $L$ be a finite dimensional Lie algebra, $K$ an ideal of $L$ and
$H=L/K$. Then
\[\mathrm{dim}(M(L))+\mathrm{dim}(L^2\cap K)\leq \mathrm{dim}(M(H))+\mathrm{dim}(M(K))+\mathrm{dim}(H/H^2\otimes K/K^2).\]
%Moreover, if $H$ and $K$ are abelian, then
%\[\mathrm{dim}(M(L))+\mathrm{dim}(L^2\cap K)\leq \mathrm{dim}(M(H))+\mathrm{dim}(M(K))+\mathrm{dim}(H\otimes K).\]
\end{cor}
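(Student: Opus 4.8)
The plan is to turn the left-hand side into the dimension of one auxiliary algebra via Proposition~\ref{p} and then bound that dimension from a free presentation. Fix a free Lie algebra $F$ and a free presentation $0\to R\to F\to L\to 0$, and let $S\trianglelefteq F$ be the ideal lying over $K$, so $S/R\cong K$ and $F/S\cong H$. By Hopf's formula, $M(L)\cong(R\cap F^2)/[F,R]$ and $M(H)\cong(S\cap F^2)/[F,S]$, and the algebra $J$ of Proposition~\ref{p} can be realised as $(S\cap F^2)/[F,R]$, with the ideal $M\cong M(L)$ inside it and $J/M\cong L^2\cap K$. Hence $\dim M(L)+\dim(L^2\cap K)=\dim J$, and from the chain $[F,R]\subseteq[F,S]\subseteq S\cap F^2$ we get $\dim J=\dim M(H)+\dim N$, where $N:=[F,S]/[F,R]$ is the kernel of the epimorphism $J\twoheadrightarrow M(H)$ of Proposition~\ref{p}(iii). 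So everything reduces to showing $\dim N\le\dim M(K)+\dim(H/H^2\otimes K/K^2)$.

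To bound $\dim N$ I would first use the Shirshov--Witt theorem: as a subalgebra of the free Lie algebra $F$, the algebra $S$ is itself free, so $0\to R\to S\to K\to 0$ is a free presentation of $K$ and Hopf's formula gives $M(K)\cong(R\cap S^2)/[S,R]$. Choosing vector-space splittings $F=S\oplus T$ and $S=R\oplus S'$ one has $[F,S]=S^2+[T,S]$, so $N$ is spanned in $J$ by the image of $S^2$ and the image of $[T,S]$. The former is a quotient of $S^2/[S,R]$, which by the exact sequence $0\to M(K)\to S^2/[S,R]\to K^2\to 0$ has dimension at most $\dim M(K)+\dim K^2$; the latter is, by bilinearity of the bracket and the inclusion $[T,R]\subseteq[F,R]$, a homomorphic image of $T\otimes K$, and a Jacobi-identity computation shows the subspace $T\otimes K^2$ already maps into the image of $S^2$. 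Identifying $T$ with $H$ as a vector space and combining these two contributions with the K\"unneth formula (Theorem~\ref{ds}) should yield the required estimate for $\dim N$.

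The hard part is precisely this last combination: one must control the overlap of the two spanning subspaces of $N$ and, more delicately, cut the $T\otimes K$--contribution down from $T\otimes(K/K^2)$ to $(H/H^2)\otimes(K/K^2)$, i.e. show that the part of $[T,S]$ arising from representatives $t$ whose image in $H$ lies in $H^2$ is already absorbed by $S^2$ modulo $[F,R]$. This is where the $H$--module structure of $K/K^2$, equivalently the action of $L$ on $K$, really enters, and where the inclusions among $[F,R]$, $[S,R]$, $S^2$ and $[F,S]$ must be tracked carefully by repeated use of the Jacobi identity. A presentation-free variant of the argument runs instead through the five-term homology sequence $M(L)\to M(H)\to K/[K,L]\to L/L^2\to H/H^2\to 0$ together with its Ganea-type prolongation bounding $\ker(M(L)\to M(H))$; there the same obstacle reappears as the task of estimating the homology term $H_1(H,K/K^2)$ by $H/H^2\otimes K/K^2$.
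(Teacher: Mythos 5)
Your reduction is set up correctly: with a free presentation $0\to R\to F\to L\to 0$ and $S$ the preimage of $K$, the algebra $J=(S\cap F^2)/[F,R]$ of Proposition~\ref{p} satisfies $\dim M(L)+\dim(L^2\cap K)=\dim J$ and $J/N\cong M(H)$ with $N=[F,S]/[F,R]$, and Shirshov--Witt does give $M(K)\cong (R\cap S^2)/[S,R]$. But the proof stops exactly where the content lies: the inequality $\dim N\le \dim M(K)+\dim(H/H^2\otimes K/K^2)$ is only announced (``should yield the required estimate''), and the two difficulties you yourself flag --- controlling the overlap of the $S^2$- and $[T,S]$-contributions, and cutting $T\otimes K$ down to $(H/H^2)\otimes(K/K^2)$ --- are never carried out. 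This is not a repairable omission at the stated level of generality: since $\dim J=\dim M(H)+\dim N$ exactly, your target bound on $N$ is equivalent to the corollary, and the corollary is false for a general ideal $K$. Take $L=H(1)\oplus A$ with $A$ one-dimensional abelian, $K$ the $H(1)$-summand, $H=L/K\cong A$. By Theorem~\ref{ds} and Lemmas~\ref{h},~\ref{ab}, $\dim M(L)=2+0+2=4$ and $\dim(L^2\cap K)=1$, so the left-hand side is $5$, while $\dim M(H)+\dim M(K)+\dim(H/H^2\otimes K/K^2)=0+2+2=4$. In your bookkeeping this is precisely the point you hoped to argue away: the $S^2$-contribution to $N$ is $S^2/(S^2\cap[F,R])$, which in general only admits the bound $\dim M(K)+\dim K^2$, and the example realizes exactly that excess of $\dim K^2$; no amount of Jacobi-identity juggling can remove it.

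What can be salvaged is the case actually needed in the paper: in the proof of Theorem~\ref{mt} the corollary is invoked only for a one-dimensional central ideal $K$, and there your approach closes at once. Centrality gives $[F,S]\subseteq R$, hence $[[f_1,f_2],s]=[[f_1,s],f_2]+[f_1,[f_2,s]]\in[F,R]$ and $[R,S]\subseteq[F,R]$, so $(f,s)\mapsto[f,s]+[F,R]$ induces a surjection from $H/H^2\otimes K$ (indeed, for central $K$ of any dimension, one gets $\dim N\le\dim M(K)+\dim(H/H^2\otimes K)$), which is exactly the needed estimate since here $M(K)=0$ and $K=K/K^2$. Be aware also that the paper's own one-line argument (K\"unneth plus Proposition~\ref{p}) does not supply the step you are missing: it uses the epimorphism of Proposition~\ref{p}(iii) in the wrong direction, treating $J$ as an image of $M(H)$ when the proposition only provides an epimorphism from $J$ onto $M(H)$. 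So your instinct that genuine work is required here was sound, but the statement (and hence any proof of it) needs a hypothesis such as $K\subseteq Z(L)$, which is all that the main theorem uses.
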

\begin{proof}
From Theorem \ref{ds} it is enough to prove that
$\mathrm{dim}(M(L))+\mathrm{dim}(L^2\cap K)\leq
\mathrm{dim}(M(H\oplus K))$. From Proposition \ref{p},
\[\mathrm{dim}(M(L))+\mathrm{dim}(L^2\cap K)=\mathrm{dim}(M)+\mathrm{dim}(J/M)=\mathrm{dim}(J)=\mathrm{dim}(\epsilon (M(H)))\]
for a suitable epimorphism $\epsilon$ of Lie algebras. Since
$\epsilon$ sends systems of generators in systems of generators,
$\mathrm{dim}(\epsilon (M(H))\leq  \mathrm{dim} (M(H\otimes K))$, as
claimed.
\end{proof}

Now we mention three results from \cite{es,es2,es3,mo} to
convenience of the reader.

\begin{lem}$\mathrm{(See}$\cite[Example 3]{es2} $\mathrm{and}$ \cite[Theorem 24]{mo}$\mathrm{).}$\label{h}
\begin{itemize}
\item[(i)]$\mathrm{dim}(M(H(1)))=2$.
\item[(ii)]$\mathrm{dim}(M(H(m)))=2m^2-m-1$ for all $m\geq 2$.
\end{itemize}
\end{lem}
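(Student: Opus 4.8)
The approach I would take is homological. I use the identification of the Schur multiplier $M(L)$ of a finite dimensional Lie algebra $L$ with its second homology $H_2(L)$ (the Lie-algebra analogue of Hopf's formula), and compute $H_2(L)$ from the Chevalley--Eilenberg complex
\[
\Lambda^3 L \xrightarrow{\ \partial_3\ } \Lambda^2 L \xrightarrow{\ \partial_2\ } L ,
\]
where $\partial_2(a\wedge b)=-[a,b]$ and $\partial_3(a\wedge b\wedge c)=-[a,b]\wedge c+[a,c]\wedge b-[b,c]\wedge a$; thus $\dim M(L)=\dim\ker\partial_2-\dim\operatorname{im}\partial_3$. (Since $H(m)$ is indecomposable, Theorem \ref{ds} is of no use here.)

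Take the standard basis $x_1,\dots,x_m,y_1,\dots,y_m,z$ of $L=H(m)$, the only nonzero brackets being $[x_i,y_i]=z$. Since $\operatorname{im}\partial_2=L^2=\langle z\rangle$ is one dimensional, $\dim\ker\partial_2=\binom{2m+1}{2}-1=2m^2+m-1$, and everything comes down to $\operatorname{im}\partial_3$. Evaluating $\partial_3$ on a basis vector $u\wedge v\wedge w$ of $\Lambda^3 L$, a nonzero value can occur only when one of the three pairs $\{u,v\},\{u,w\},\{v,w\}$ is a ``conjugate'' pair $\{x_i,y_i\}$ --- and at most one of them can be, since two disjoint conjugate pairs would require four distinct vectors --- in which case $\partial_3(x_i\wedge y_i\wedge w)=-z\wedge w$, which is $0$ when $w=z$ and equals $\pm z\wedge x_j$ or $\pm z\wedge y_j$ otherwise.

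For $m=1$ the space $\Lambda^3 L$ is spanned by the single vector $x_1\wedge y_1\wedge z$, whose image is $-z\wedge z=0$; hence $\operatorname{im}\partial_3=0$ and $\dim M(H(1))=\dim\ker\partial_2=2$, which is (i). For $m\geq2$, given any index $j$ one may pick $i\neq j$ and use $\partial_3(x_i\wedge y_i\wedge x_j)=\pm z\wedge x_j$ and $\partial_3(x_i\wedge y_i\wedge y_j)=\pm z\wedge y_j$; since the $2m$ vectors $z\wedge x_1,\dots,z\wedge x_m,z\wedge y_1,\dots,z\wedge y_m$ belong to a basis of $\Lambda^2 L$ they are independent, so $\dim\operatorname{im}\partial_3=2m$ and $\dim M(H(m))=(2m^2+m-1)-2m=2m^2-m-1$, which is (ii).

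The one place requiring care --- and the source of the difference between the two cases --- is the computation of $\operatorname{im}\partial_3$: for $m\geq 2$ one must observe that the image already contains \emph{every} $z\wedge x_i$ and $z\wedge y_i$, including those whose index matches a conjugate pair (possible precisely because a second conjugate pair is then available to wedge against), whereas for $m=1$ the unique $3$--fold wedge happens to be a cycle. The rest is bookkeeping with binomial coefficients; as an independent check on (i), one may present $H(1)=F/\gamma_3(F)$ with $F$ free of rank $2$, so that $M(H(1))\cong\gamma_3(F)/\gamma_4(F)$, of dimension $\tfrac13(2^3-2)=2$ by Witt's formula.
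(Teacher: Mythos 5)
Your computation is correct, but there is nothing in the paper to compare it with: Lemma \ref{h} is quoted as a known result, with the proof delegated entirely to the cited sources (Example 3 of Batten--Stitzinger and Theorem 24 of Moneyhun), so any argument you give is necessarily a different route. Yours is a legitimate, self-contained one. It rests on the standard identification of the multiplier $M(L)$ defined by Hopf's formula with the second homology of the Chevalley--Eilenberg complex, and the bookkeeping checks out: $\operatorname{im}\partial_2=\langle z\rangle$ gives $\dim\ker\partial_2=\binom{2m+1}{2}-1=2m^2+m-1$; in a basis wedge $u\wedge v\wedge w$ at most one of the three pairs can be conjugate (two distinct conjugate pairs $\{x_i,y_i\}$, $\{x_j,y_j\}$ are automatically disjoint, hence need four vectors), so every nonzero value of $\partial_3$ is $\pm z\wedge x_j$ or $\pm z\wedge y_j$ coming from $x_i\wedge y_i\wedge x_j$ or $x_i\wedge y_i\wedge y_j$ with $i\neq j$; consequently $\operatorname{im}\partial_3=0$ when $m=1$ (the lone generator $x_1\wedge y_1\wedge z$ is a cycle) and $\dim\operatorname{im}\partial_3=2m$ when $m\geq 2$, yielding $\dim M(H(1))=2$ and $\dim M(H(m))=2m^2-m-1$ exactly as stated. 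You are also right that Theorem \ref{ds} cannot help here, since $H(m)$ does not decompose, and your cross-check for (i) via $H(1)=F/\gamma_3(F)$, Hopf's formula and Witt's formula is sound. What your argument buys is independence from the literature and an explicit view of where the $m=1$ and $m\geq2$ cases diverge (whether $z\wedge x_j$, $z\wedge y_j$ are hit by $\partial_3$); what the paper's citation buys is brevity, plus consistency with its convention of treating the multiplier via free presentations rather than homology --- if you wanted to match that framework literally, you would add one sentence recording the isomorphism $M(L)\cong H_2(L)$ before computing.
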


\begin{lem} $\mathrm{(See}$\cite[Lemma 3]{es}$\mathrm{).}$\label{ab} A Lie algebra $L$ of dimension $n$ is abelian if and only if $\mathrm{dim}(M(L))=\frac{1}{2}n(n-1)$.
\end{lem}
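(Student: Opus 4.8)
\medskip
\noindent\textit{Proof plan.}
The plan is to prove, by induction on $n=\dim L$, the combined statement that every finite dimensional Lie algebra $L$ satisfies $\dim(M(L))\le\frac{1}{2}n(n-1)$ and that equality holds exactly when $L$ is abelian; the case $n=1$ is trivial since then $M(L)=0$. The abelian half is the easy direction: if $L$ is abelian of dimension $n$, write $L=I\oplus L'$ with $I$ a one--dimensional ideal and $L'$ abelian of dimension $n-1$. Then $M(I)=0$, $\dim(M(L'))=\frac{1}{2}(n-1)(n-2)$ by the inductive hypothesis, and $\dim(I/I^2\otimes L'/(L')^2)=n-1$, so Theorem \ref{ds} yields
\[
\dim(M(L))=0+\frac{1}{2}(n-1)(n-2)+(n-1)=\frac{1}{2}n(n-1).
\]
(Alternatively, one may quote the classical isomorphism $M(L)\cong L\wedge L$ for abelian $L$, of dimension $\frac{1}{2}n(n-1)$.)

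For the converse, suppose $L$ is \emph{not} abelian and set $m=\dim(L^2)\ge 1$. Since $L^2\subsetneq L$ (which always holds in the nilpotent setting that concerns us), we may pick a subspace $K$ with $L^2\subseteq K$ and $\dim(L/K)=1$; then $K$ is an ideal, $H=L/K$ is one--dimensional abelian, and $L^2\cap K=L^2$. Now apply Corollary \ref{sr} with this $K$. Using $M(H)=0$, the trivial bound $\dim(H/H^2\otimes K/K^2)=\dim(K/K^2)\le\dim(K)=n-1$, and the inductive estimate $\dim(M(K))\le\frac{1}{2}(n-1)(n-2)$, we obtain
\[
\dim(M(L))+m\le\frac{1}{2}(n-1)(n-2)+(n-1)=\frac{1}{2}n(n-1),
\]
hence $\dim(M(L))\le\frac{1}{2}n(n-1)-m<\frac{1}{2}n(n-1)$. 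This simultaneously confirms the upper bound in dimension $n$ and rules out equality for non-abelian $L$, which closes the induction.

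The one point requiring care is the choice of the codimension--one ideal $K$ containing $L^2$ in the non-abelian step: it is available precisely because $L^2\neq L$, which is automatic for nilpotent Lie algebras, so the perfect case never intervenes in the applications of the next section (a fully general statement would need $L^2=L$ treated separately). I expect the main bookkeeping obstacle to be keeping the upper bound and the equality characterization bundled into a single inductive hypothesis, so that the estimate $\dim(M(K))\le\frac{1}{2}(n-1)(n-2)$ is legitimately available even when $K$ itself happens to be non-abelian; once that is in place, the argument rests only on the identity $\frac{1}{2}(n-1)(n-2)+(n-1)=\frac{1}{2}n(n-1)$ and on the sharpness of Corollary \ref{sr}.
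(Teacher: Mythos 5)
The paper does not actually prove this lemma --- it is quoted from \cite[Lemma 3]{es} --- so your argument is necessarily an independent route: an induction on $\dim L$ combining Theorem \ref{ds} (K\"unneth) for the abelian direction with Corollary \ref{sr} applied to a codimension-one ideal $K\supseteq L^2$ for the converse. That scheme is sound, the arithmetic is right, and the direction the paper actually uses in Theorem \ref{mt} (abelian $\Rightarrow$ $\dim M(L)=\frac{1}{2}n(n-1)$, applied to the abelian complement $K$) is completely established by your first paragraph alone. The cited source instead works with a free presentation (the Hopf-type description of $M(L)$ and Moneyhun's bound), so your proof is genuinely different and more elementary, at the price of the restriction discussed next.

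The gap is the perfect case, and it is slightly larger than your closing remark concedes. The lemma is stated for an arbitrary finite-dimensional Lie algebra, and your combined inductive statement (``bound for all algebras of dimension $n$, equality iff abelian'') is never verified when $L^2=L$, since then no codimension-one ideal containing $L^2$ exists. Moreover the defect propagates into the inductive hypothesis itself: even when $L$ is not perfect, the ideal $K$ you choose may be perfect --- for $L\cong \mathfrak{sl}_2(F)\oplus F$ one is forced to take $K=L^2\cong\mathfrak{sl}_2(F)$ --- and then the estimate $\dim(M(K))\leq\frac{1}{2}(n-1)(n-2)$ is not delivered by your induction, so ``treat $L^2=L$ separately at the top level'' does not by itself close the argument. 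Two repairs are available: either restrict the whole induction to nilpotent (or solvable) Lie algebras, where every ideal is again nilpotent and $L^2\neq L$ throughout --- this covers every use made of the lemma in this paper --- or import the general bound $\dim(M(L))\leq\frac{1}{2}n(n-1)$ of Moneyhun, proved via a free presentation as in \cite{es,mo}, as an external input and keep your induction only for the equality analysis; in the latter case the perfect $L$ still needs its own (short) argument that equality fails. As written, your proof establishes the lemma for the nilpotent setting but not in the stated generality.
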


\begin{prop} $\mathrm{(See}$\cite[Proposition 1]{es3}$\mathrm{).}$\label{lj} A nilpotent Lie algebra $L$ of dimension
$n$ has $\mathrm{dim}(L^2)+ \mathrm{dim} (M(L))\leq
\frac{1}{2}n(n-1)$.
\end{prop}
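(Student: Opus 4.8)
The plan is to prove the inequality by induction on $m=\dim(L^2)$, with Corollary \ref{sr} as the engine that lets us pass from $L$ to a quotient of codimension one. For the base case $m=0$ the algebra $L$ is abelian, so Lemma \ref{ab} gives $\dim(M(L))=\tfrac12 n(n-1)$ and the asserted inequality holds, in fact with equality.

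For the inductive step, suppose $m\geq 1$ and that the statement holds for all nilpotent Lie algebras whose derived subalgebra has smaller dimension. Since $L$ is nilpotent and $L^2\neq 0$, the last nonzero term of the lower central series is a nonzero subalgebra of $Z(L)\cap L^2$; choose a one-dimensional subspace $K$ of it. A line in the centre is automatically an ideal, and since $K\subseteq L^2$ we have $L^2\cap K=K$, so $\dim(L^2\cap K)=1$. Set $H=L/K$, so $\dim H=n-1$; moreover $H^2=L^2/K$ has dimension $m-1$, hence $\dim(H/H^2)=(n-1)-(m-1)=n-m$. Because $K$ is one-dimensional abelian, $M(K)=0$ and $\dim(K/K^2)=1$, so $\dim(H/H^2\otimes K/K^2)=n-m$. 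Feeding this into Corollary \ref{sr} yields $\dim(M(L))+1\leq \dim(M(H))+(n-m)$.

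Now apply the inductive hypothesis to $H$, which is nilpotent of dimension $n-1$ with $\dim(H^2)=m-1$, to get $\dim(M(H))\leq \tfrac12(n-1)(n-2)-(m-1)$. Substituting into the previous inequality and using the identity $\tfrac12 n(n-1)=\tfrac12(n-1)(n-2)+(n-1)$ together with $m\geq 1$, one obtains
\[\dim(M(L))+m\;\leq\;\tfrac12(n-1)(n-2)+n-m\;\leq\;\tfrac12(n-1)(n-2)+(n-1)\;=\;\tfrac12 n(n-1),\]
which completes the induction.

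The argument is short, and the only substantive input is Corollary \ref{sr}. The one place to be careful is the choice of $K$: it must lie inside $Z(L)\cap L^2$ so that simultaneously $L^2\cap K$ is one-dimensional \emph{and} $\dim(H^2)$ drops by exactly one; and then the final arithmetic step consumes precisely the hypothesis $m\geq 1$, which is why the bound is sharp only at the abelian end unless further structure is imposed.
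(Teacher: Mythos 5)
Your argument is correct: the base case via Lemma \ref{ab}, the choice of a one-dimensional ideal $K\subseteq L^2\cap Z(L)$ (available since $L$ is nilpotent and $L^2\neq 0$), the application of Corollary \ref{sr} with $\dim(M(K))=0$ and $\dim(H/H^2\otimes K/K^2)=n-m$, and the induction on $\dim(L^2)$ all fit together, and the final arithmetic closes precisely because $m\geq 1$. The paper itself offers no proof of this proposition---it is quoted from \cite{es3}---but your induction is essentially the same device the authors deploy in the inductive step of their main Theorem \ref{mt} (a one-dimensional central ideal inside $L^2$ fed into Corollary \ref{sr}), so your route coincides with the paper's own machinery.
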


Hardy and others declare in \cite{es3} that they were working on
higher values of $t(L)$ (namely, the case $t(L)\geq9$ is still
open). Their investigations come from the bound in Proposition
\ref{lj}.% Of course, changing the bound, we will have a different
%classification of the nilpotent Lie algebras in terms of $t(L)$.
%More details will be given in the next section.
%However their
%classification is invariant not only under isomorphisms of Lie
%algebras, but also under isoclinisms of Lie algebras. This notion
%was already studied in \cite{mo} and is recalled below. Originally
%it was introduced by P.Hall in the context of finite $p$--groups.

% \begin{defn} \label{d:1}
% Let $L_1$ and $L_2$ be two finite dimensional Lie algebras.
% A pair $(\alpha,\beta)$ is said to be an isoclinism
%from $L_1$ to $L_2$, or briefly $L_1 \ _{\widetilde{}} \ L_2$, if
% the following conditions are satisfied:
%\begin{itemize}
%\item[(i)] $\alpha$ is an isomorphism of Lie algebras from
%$L_1/Z(L_1)$ to $L_2/Z(L_2)$; \item[(ii)]  $\beta$ is an isomorphism
%of Lie algebras from $[L_1,L_1]$ to $[L_2,L_2]$;
%\item[(iii)]  The following diagram is commutative:
%\[\begin{CD}
%\frac {L_1}{Z(L_1)}\oplus \frac {L_1}{Z(L_1)}  @>\alpha^{2}>>
%\frac {L_2}{Z(L_2)} \oplus \frac {L_2}{Z(L_2)}\\
%@V\gamma(L_1) VV @V\gamma(L_2)VV \vspace{.3cm}\\
%[L_1,L_1] @>\beta>>[L_2,L_2].
%\end{CD}\]
%where $\gamma (L_1)(x_1Z(L_1),y_1Z(L_1))=[x_1,y_1]$ and $\gamma
%(L_2)(x_2Z(L_2),y_2Z(L_2))=[x_2,y_2]$ for all $x_1,y_1 \in L_1$ and
%$x_2,y_2 \in L_2$.
%\end{itemize}
%\end{defn}

\section{Main Theorem}

The following result provides a bound which is less than the bound
in Proposition \ref{lj} except for the case $L\cong H(1)$.

\begin{thm}\label{mt}
Let $L$ be a nilpotent Lie algebra of $\mathrm{dim}(L)=n$ and
$\mathrm{dim}(L^2)=m~(m\geq 1)$. Then\[\mathrm{dim}~M(L)\leq \frac
{1}{2}(n+m-2)(n-m-1)+1.\] Moreover, if $m=1$, then the equality
holds if and only if $L\cong H(1)\oplus A$, where $A$ is an abelian
Lie algebra of  $\mathrm{dim}(A)=n-3$.
\end{thm}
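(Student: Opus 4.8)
The plan is to argue by induction on $m = \dim(L^2)$, using Corollary \ref{sr} together with a carefully chosen ideal, and to treat the base case $m=1$ by a direct structural analysis. For the base case, pick a one-dimensional central ideal $K \subseteq L^2 \cap Z(L)$ (which exists since $L$ is nilpotent, so $L^2 \cap Z(L) \neq 0$, and here $\dim(L^2)=1$ forces $K = L^2$). Then $H = L/K$ is abelian of dimension $n-1$, so by Lemma \ref{ab}, $\dim(M(H)) = \frac{1}{2}(n-1)(n-2)$, and $\dim(M(K)) = 0$ since $K$ is one-dimensional. Since $K \subseteq Z(L)$ we have $K^2 = 0$ and $\dim(H/H^2) = n-1$, so $\dim(H/H^2 \otimes K/K^2) = n-1$. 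Corollary \ref{sr} gives $\dim(M(L)) + \dim(L^2 \cap K) \leq \frac{1}{2}(n-1)(n-2) + 0 + (n-1)$; since $L^2 \cap K = K$ has dimension $1$, this yields $\dim(M(L)) \leq \frac{1}{2}(n-1)(n-2) + n - 2 = \frac{1}{2}(n-2)(n-1) + (n-2)$, and a short computation shows the right side equals $\frac{1}{2}(n-2)(n+1) = \frac{1}{2}(n+m-2)(n-m-1)+1$ when $m=1$. This settles the bound in the base case.

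For the general bound with $m \geq 2$, I would again choose a one-dimensional central ideal $K \subseteq L^2 \cap Z(L)$, set $H = L/K$, and observe $\dim(H^2) = m-1$ (one checks $H^2 = L^2/K$) and $\dim(H) = n-1$. By the inductive hypothesis, $\dim(M(H)) \leq \frac{1}{2}(n+m-4)(n-m-1)+1$. Again $\dim(M(K)) = 0$, $\dim(L^2 \cap K) = 1$, and $\dim(H/H^2 \otimes K/K^2) = \dim(H/H^2) = n - m$ (since $\dim(H) = n-1$ and $\dim(H^2) = m-1$). Plugging into Corollary \ref{sr}:
\[
\dim(M(L)) \leq \frac{1}{2}(n+m-4)(n-m-1) + 1 + (n-m) - 1 = \frac{1}{2}(n+m-4)(n-m-1) + (n-m),
\]
and one verifies algebraically that $\frac{1}{2}(n+m-4)(n-m-1) + (n-m) = \frac{1}{2}(n+m-2)(n-m-1) + 1$, completing the induction. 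The only subtlety is confirming $\dim(H^2) = m-1$: this holds because $K \subseteq L^2$ and $K$ has dimension $1$, so $H^2 = (L/K)^2 = (L^2 + K)/K = L^2/K$.

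For the equality case when $m=1$, suppose $\dim(M(L)) = \frac{1}{2}(n-2)(n+1)$. Since $m=1$, $L$ is a nilpotent Lie algebra with one-dimensional derived subalgebra. I would invoke the classification of such algebras: a nilpotent Lie algebra with $\dim(L^2)=1$ decomposes as $H(k) \oplus A$ for some Heisenberg algebra $H(k)$ and abelian $A$ (this is standard — the alternating form induced by the bracket on $L/Z(L)$ has rank $2k$). Then by Theorem \ref{ds} and Lemma \ref{h}, $\dim(M(L)) = \dim(M(H(k))) + \dim(M(A)) + \dim(H(k)/H(k)^2 \otimes A/A^2)$. With $\dim(A) = n - 2k - 1$, this is $(2k^2 - k - 1)[\text{or } 2 \text{ if } k=1] + \frac{1}{2}(n-2k-1)(n-2k-2) + 2k(n-2k-1)$. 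Setting this equal to $\frac{1}{2}(n-2)(n+1)$ and solving should force $k = 1$, hence $L \cong H(1) \oplus A$ with $\dim(A) = n-3$; the converse direction is the same computation run backwards.

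I expect the main obstacle to be the equality analysis: one must either cite or prove the decomposition $L \cong H(k) \oplus A$ for nilpotent Lie algebras with one-dimensional commutator, and then carry out the maximization over $k$ cleanly, checking that the $k=1$ case (where the formula for $\dim(M(H(k)))$ differs, being $2$ rather than $2k^2-k-1$) genuinely gives the unique maximum. The inductive bound itself is routine once the right ideal is chosen; the delicate point is that choosing $K$ central guarantees $K^2 = 0$ and $\dim(H/H^2) = n - m$, which is exactly what makes the numerology close up.
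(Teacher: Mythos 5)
Your inductive step for $m\geq 2$ is correct and is exactly the paper's argument: pick a one-dimensional ideal $K\subseteq L^2\cap Z(L)$, note $(L/K)^2=L^2/K$ has dimension $m-1$, and feed the induction hypothesis into Corollary \ref{sr}; the arithmetic there closes up as you say. The genuine gap is in your base case $m=1$. Applying Corollary \ref{sr} with $K=L^2$ and $H=L/L^2$ abelian gives $\mathrm{dim}(M(L))\leq \frac{1}{2}(n-1)(n-2)+(n-2)=\frac{1}{2}(n-2)(n+1)$, but your claim that this equals $\frac{1}{2}(n+m-2)(n-m-1)+1=\frac{1}{2}(n-1)(n-2)+1$ when $m=1$ is false: the two differ by $n-3$, and they agree only for $n=3$. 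So this route is too lossy to prove the theorem for $m=1$ (it only recovers the weaker bound of Proposition \ref{lj}), and since your induction for $m\geq 2$ bottoms out at $m=1$, the whole argument is not established as written.

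The repair is precisely the structural analysis you defer to the equality discussion, and it must be carried out in full because it is needed for the bound itself, not just for the characterization of equality. This is what the paper does: for $\mathrm{dim}(L^2)=1$ it proves inline the decomposition $L\cong H\oplus K$ with $H$ Heisenberg and $K$ abelian (choosing a complement $H/L^2$ of $Z(L)/L^2$ in $L/L^2$, so that $Z(H)=H^2=L^2$, and a complement $K$ of $L^2$ in $Z(L)$), and then computes $\mathrm{dim}(M(H(k)\oplus K))$ via Theorem \ref{ds}, Lemma \ref{h} and Lemma \ref{ab}: one gets $\frac{1}{2}n(n-3)$ for $k\geq 2$ and $\frac{1}{2}n(n-3)+2=\frac{1}{2}(n-1)(n-2)+1$ for $k=1$. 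This single computation yields the $m=1$ bound, shows equality is attained exactly when $k=1$, i.e.\ $L\cong H(1)\oplus A$ with $\mathrm{dim}(A)=n-3$, and gives the converse direction at once. Your sketch of this step (``setting this equal \dots should force $k=1$'') is the right idea but is left unverified; it is elementary, yet it is the load-bearing part of the proof, so it cannot be waved through while the Corollary \ref{sr} computation is presented as having ``settled'' the base case.
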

\begin{proof}
Assume $\mathrm{dim}(L^2)=1$. Then $L/L^2$ is an abelian Lie algebra
of $\mathrm{dim}(L/L^2)=n-1$. Since $L^2\subseteq Z(L)$, we may
consider a complement $H/L^2$ of $Z(L)/L^2$ in $L/L^2$. So we have
$L=H+Z(L)$ and $L^2=H^2$. On the other hand, $H\cap Z(L)\subseteq H$
and so $Z(H)=L^2$. Since $L^2\subseteq Z(L)$, $L^2$ must have a
complement $K$ in $Z(L)$. Let $L^2\oplus K=Z(L)$ so we have $L\cong
K\oplus H$. By Theorem \ref{ds},
\[\mathrm{dim} (M(K\oplus H)) = \mathrm{dim} (M(K)) + \mathrm{dim} (M(H)) + \mathrm{dim} (K/K^2\otimes H/H^2).\]
Since $H$ is a Heisenberg algebra and $K$ is abelian, two cases
should be considered.

Case 1. Assume $\mathrm{dim}(H)=2m+1$ for $m\geq 2$. By Lemmas
\ref{h} and \ref{ab},
\[\mathrm{dim}(M(H))=2m^2-m-1,\]
\[\mathrm{dim}(M(K))=\frac{1}{2}(n-2m-1)(n-2m-2),\]
\[\mathrm{dim}(K \otimes H/H^2)=\mathrm{dim}(K) \cdot \mathrm{dim}(H/H^2)=(n-2m-1)(2m).\]
and we deduce that
\[\mathrm{dim} (M(K\oplus H))=\f{1}{2}(n-2m-1)(n-2m-2)+(2m^2-m-1)+(n-2m-1)(2m)\]
%\[=\f{n^2-2mn-2n-2mn+4m^2+4m-n+2m+2+4m^2-2m-2+4mn-8m^2-8m}{2}\]
%\[=\f{n^2-3n-4m}{2}<\f{n^2-3n+4}{2}=\f{1}{2}(n-1)(n-2)+1.\]
\[=\f{1}{2}n(n-3)<\f{1}{2}(n-1)(n-2)+1.\]

Case 2. Assume $m=1$ and $H\cong H(1)$. By Lemmas \ref{h} and
\ref{ab},
\[\mathrm{dim}(M(H))=2,\]
\[\mathrm{dim}(M(K))=\frac{1}{2}(n-3)(n-4),\]
\[\mathrm{dim}(K \otimes H/H^2)=\mathrm{dim}(K) \cdot \mathrm{dim}(H/H^2)=(n-3)\cdot (2)\]
and  we deduce that \[\mathrm{dim}(M(K\oplus
H))=\f{1}{2}(n-3)(n-4)+2+ 2(n-3)\]%=\frac{n^2-7n+12+2+4n-16}{2}\]
%\[=\f{n^2-3n-2}{2}<\f{n^2-3n+4}{2}=\f{1}{2}(n-1)(n-2)+1.\]
\[=\f{1}{2}n(n-3)+2.\]Now we proceed by induction on $m$.

Let $m>1$. Since $0\neq L^2\cap Z(L)$, there exists  an ideal $K$ of
dimension $1$ contained in $L^2\cap Z(L)$. By induction hypothesis
and Lemma \ref{sr}, we have
\[1+\mathrm{dim}(M(L))\leq \mathrm{dim}(M(L/K))+\mathrm{dim}(M(K))+\mathrm{dim}(L/L^2\otimes K)\]
and so
\[\mathrm{dim}(M(L))\leq \f{1}{2}(n+m-4)(n-m-1)+1+n-m-1\]
\[=\f{1}{2}(n+m-2)(n-m-1)+1,\] as claimed.
\end{proof}

\end{document}